\newtheorem{Thm}{Theorem}[section]
\newtheorem{lem}[Thm]{Lemma}
\DeclareMathOperator{\Reg}{Reg}
\newcommand{\Ocal}{O}
\DeclareMathOperator{\id}{id}
\global\long\def\epsilon{\varepsilon}
\begin{document}
\date{}

\title{Weighted averages of arithmetic functions over regular integers modulo $n$}
\author{Waseem Alass and Sumaia Saad Eddin}

\maketitle
{\def\thefootnote{}
\footnote{{\it Mathematics Subject Classification 2010: 11A25, 11N37.\\ 
Keywords: Regular integers modulo $n$, $\gcd$-sum function, the Euler totient function, the divisor function.}} 

\begin{abstract}
We investigate weighted averages of arithmetic functions over regular integers modulo $n$, extending previous identities of Kiuchi and Matsuoka. We establish general transformation formulas for the associated partial sums corresponding to arbitrary arithmetic functions. These formulas provide a unified framework for deriving asymptotic results. As applications, we obtain explicit asymptotic formulas for weighted averages involving several classical multiplicative functions, including the identity, Möbius, divisor, and Jordan totient functions.
\end{abstract}

\maketitle
\section{Introduction and results}
Arithmetic functions involving greatest common divisors constitute a classical and active area of research in number theory. Among the most extensively studied examples is the $\gcd$-sum function (also known as Pillai's arithmetic function),
\[
P(n)=\sum_{k=1}^{n}\gcd(k,n),
\]
which was introduced by Pillai~\cite{Pillai} in 1933. He proved the remarkable identity
\[
P(n)=\sum_{d\mid n} d\,\phi\!\left(\frac{n}{d}\right),
\]
revealing a close relationship between the $\gcd$-sum function and the Euler totient function $\phi(n)$. Since then, numerous arithmetic, algebraic and asymptotic properties of $P(n)$ have been investigated. We refer the reader to the survey of T\'oth~\cite{T1} for a comprehensive account of this topic.

A natural extension of the classical $\gcd$-sum function is obtained by restricting the summation to the set of regular integers modulo $n$. Recall that an integer $k$ is called \emph{regular modulo $n$} if there exists an integer $x$ satisfying
\[
k^2x\equiv k\pmod n,
\]
that is, the residue class of $k$ is a regular element of the ring $\mathbb Z_n$. It is known that $k$ is regular modulo $n$ if and only if $\gcd(k,n)$ is a unitary divisor of $n$. Here, a divisor $d$ of $n$ is called \emph{unitary}, denoted by $d\mid\mid n$, whenever
\[
d\mid n
\quad\text{and}\quad
\gcd\!\left(d,\frac{n}{d}\right)=1.
\]

Let
\[
\Reg_n=\{k\in\mathbb N:1\le k\le n,\; k \text{ is regular }\pmod n\},
\]
and let $\rho(n)=|\Reg_n|$. Regular integers modulo $n$ have attracted considerable attention during the last two decades because of their rich arithmetic structure and their close connections with multiplicative and unitary arithmetic functions.

T\'oth~\cite{T2} introduced the $\gcd$-sum function over regular integers modulo $n$,
\[
\widetilde P(n)=\sum_{k\in\Reg_n}\gcd(k,n),
\]
and established the identity
\begin{equation}
\label{eq00}
\widetilde P(n)
=
\sum_{d\mid\mid n}
d\,\phi\!\left(\frac{n}{d}\right).
\end{equation}
Furthermore, he proved the asymptotic formula
\begin{equation}
\label{eq000}
\sum_{n\le x}\widetilde P(n)
=
\frac{x^2}{2\zeta(2)}
\left(K_1\log x+K_2\right)
+
\Ocal\!\left(
x^{3/2}
\exp\!\left(
-C\frac{(\log x)^{3/5}}
{(\log\log x)^{1/5}}
\right)
\right),
\end{equation}
where $C>0$ is an absolute constant, $\zeta(s)$ denotes the Riemann zeta function, 
\begin{equation}
\label{eq0000}
K_1
=
\prod_p
\left(
1-\frac{1}{p(p+1)}
\right),
\end{equation}
and
\begin{equation}
\label{eq00000}
K_2
=
K_1
\left(
2\gamma-\frac12
-
2\frac{\zeta'(2)}{\zeta(2)}
\right)
-
\sum_{n\ge1}
\frac{\mu(n)
(\log n-\alpha(n)+2\beta(n))}
{n\psi(n)}.
\end{equation}
Here, $\mu$ denotes the M\"obius function, $\gamma$ is Euler's constant, 
\[
\alpha(n)=
\sum_{p\mid n}
\frac{\log p}{p-1},
\qquad
\beta(n)=
\sum_{p\mid n}
\frac{\log p}{p^2-1},
\]
and
\[
\psi(n)
=
n
\prod_{p\mid n}
\left(
1+\frac1p
\right)
\]
is the Dedekind function.

More recently, Zhang and Zhai~\cite{Z} improved the error term in~\eqref{eq000}, assuming the Riemann Hypothesis, by proving
\[
\sum_{n\le x}\widetilde P(n)
=
\frac{x^2}{2\zeta(2)}
\left(K_1\log x+K_2\right)
+
\Ocal\!\left(x^{15/11+\varepsilon}\right),
\]
for every sufficiently small $\varepsilon>0$.

In another direction, Apostol and T\'oth~\cite{A.T} introduced multidimensional analogues of the function $\rho(n)$ and obtained elegant identities involving Bernoulli polynomials, the Gamma function and cyclotomic polynomials. More recently, Kiuchi and Matsuoka~\cite[Theorem~3.1]{K.M} proved that, for every arithmetic function $f$ and every fixed positive integer $r$,
\[
\frac1{n^r}
\sum_{k\in\Reg_n}
f(\gcd(k,n))k^r
=
\frac{f(n)}2
+
\frac1{r+1}
\sum_{m=0}^{ \lfloor r/2\rfloor}
\binom{r+1}{2m}
B_{2m}
\sum_{d\mid\mid n}
f\!\left(\frac nd\right)
\phi_{1-2m}(d),
\]
where $B_m$ denotes the $m$th Bernoulli number. For every integer $s$, the generalized Jordan totient function is defined by
\[
\phi_s(n)
=
\sum_{d\mid n}
d^s\mu\!\left(\frac{n}{d}\right)
=
n^s\prod_{p\mid n}
\left(1-\frac{1}{p^s}\right).
\]
In particular, $\phi_1=\phi$.

Although weighted identities over regular integers modulo $n$ have been established, their partial sums and the resulting asymptotic formulas for specific arithmetic functions have received comparatively less attention. The purpose of this paper is to derive general transformation formulas for these partial sums and to apply them to several classical multiplicative functions. Our first main result is the following.
\begin{Thm}
\label{Thm1}
 For any arithmetic function $f$, any sufficiently large positive number $x>2$ and fixed positive integer $r$, we have 
\begin{multline}
\label{eq1}
U_f(x):=\sum_{n\leq x}\frac{1}{n^{r+1}}\sum_{k\in \Reg_n}f(\gcd(k,n))k^r=\\
\frac{1}{2}\sum_{n\leq x}\frac{f(n)}{n}+\frac{1}{r+1}\sum_{m=0}^{\lfloor r/2\rfloor}\binom{r+1}{2m}B_{2m}\sum_{\substack{d\ell\leq x\\ \gcd(d, \ell)=1}}\frac{f(\ell)}{\ell}\frac{\phi_{1-2m}(d)}{d}. 
\end{multline}
If, in addition, $f(n)\neq 0$ for every $n\in\mathbb N$, then
\begin{multline}
\label{eq2}
V_f(x):=\sum_{n\leq x}\frac{1}{n^{r}f(n)}\sum_{k\in \Reg_n}f(\gcd(k,n))k^r=
\frac{x}{2}-\frac{\theta(x)}{2}-\frac{1}{4}\\+\frac{1}{r+1}\sum_{m=0}^{\lfloor r/2\rfloor}\binom{r+1}{2m}B_{2m}\sum_{\substack{d\ell\leq x\\ \gcd(d, \ell)=1}}\frac{f(\ell)\phi_{1-2m}(d)}{f(d\ell)}, 
\end{multline}
where $\theta(x)=x-\lfloor x \rfloor-1/2$.
\end{Thm}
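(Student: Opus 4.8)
The plan is to deduce both identities directly from the Kiuchi--Matsuoka evaluation \cite[Theorem~3.1]{K.M}, which gives the exact closed form
$\frac{1}{n^r}\sum_{k\in\Reg_n}f(\gcd(k,n))k^r=\frac{f(n)}{2}+\frac{1}{r+1}\sum_{m=0}^{[r/2]}\binom{r+1}{2m}B_{2m}\sum_{d||n}f(n/d)\phi_{1-2m}(d)$
for the inner weighted power sum. The left-hand sides of \eqref{eq1} and \eqref{eq2} are precisely this quantity weighted by $1/n$ and by $1/f(n)$ respectively and then summed over $n\le x$, so the argument is purely formal: insert the closed form, interchange the finite sum over $m$ with the summation over $n$, and re-index the resulting unitary-divisor sums. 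No error term appears, so there is nothing to estimate.

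For \eqref{eq1} I would factor $n^{-(r+1)}=n^{-1}n^{-r}$ and substitute the closed form, splitting the result into two pieces. The constant piece is $\frac12\sum_{n\le x}f(n)/n$, which is already the first term on the right. For the remaining piece the decisive step is the change of variables that replaces the double condition ``$n\le x$ and $d||n$'' by a single sum over coprime pairs: writing $\ell=n/d$, the unitary divisors $d$ of $n\le x$ are in bijection with the pairs $(d,\ell)$ satisfying $d\ell\le x$ and $\gcd(d,\ell)=1$. This converts $\sum_{n\le x}\frac1n\sum_{d||n}f(n/d)\phi_{1-2m}(d)$ into $\sum_{d\ell\le x,\ \gcd(d,\ell)=1}\frac{f(\ell)}{\ell}\frac{\phi_{1-2m}(d)}{d}$, giving \eqref{eq1}.

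Equation \eqref{eq2} follows the same template with weight $1/f(n)$, which is legitimate since $f(n)\ne0$ for every $n$. Here the constant piece $f(n)/2$ contributes $\sum_{n\le x}\frac12=[x]/2$, and the one genuinely arithmetic step is to rewrite $[x]$ in the stated form: from $\theta(x)=x-[x]-1/2$ one gets $[x]=x-\theta(x)-1/2$, hence $[x]/2=\frac{x}{2}-\frac{\theta(x)}{2}-\frac14$, which accounts for the first three terms of \eqref{eq2}. The remaining double sum is transformed by the same $(d,\ell)$-reparametrization, now using $f(n)=f(d\ell)$ and $f(n/d)=f(\ell)$, to produce $\sum_{d\ell\le x,\ \gcd(d,\ell)=1}\frac{f(\ell)\phi_{1-2m}(d)}{f(d\ell)}$.

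I do not expect a serious obstacle, since both identities are exact equalities of finite sums. The only points demanding care are verifying the bijection between $\{(n,d):n\le x,\ d||n\}$ and $\{(d,\ell):d\ell\le x,\ \gcd(d,\ell)=1\}$—in particular that the unitarity constraint $\gcd(d,n/d)=1$ is exactly $\gcd(d,\ell)=1$—and the small bookkeeping that turns the count $[x]$ into the $\theta$-expression.
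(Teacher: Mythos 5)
Your proposal is correct and follows essentially the same route as the paper: substitute the Kiuchi--Matsuoka closed form for the inner sum, reparametrize the unitary-divisor condition $d\,\Vert\, n$ as pairs $(d,\ell)$ with $d\ell\leq x$ and $\gcd(d,\ell)=1$, and for the second identity use $\sum_{n\leq x}1=x-\theta(x)-\tfrac{1}{2}$. No gaps.
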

The usefulness of Theorem~\ref{Thm1} is illustrated by the following applications, which provide explicit asymptotic formulas associated with the identity, M\"obius, divisor and second Jordan totient functions.
\begin{Thm}
\label{thm4}
Under the hypotheses of Theorem \ref{Thm1}, we have 
\begin{multline}
\label{eqcor11}
U_{\id}(x)=
\frac{K_1}{(r+1)\zeta(2)}x\log x\\+\left( \frac{1}{2}-\frac{K_1}{2(r+1)\zeta(2)}+\frac{K_2}{(r+1)\zeta(2)}+\frac{1}{r+1}\sum_{m=1}^{\lfloor r/2\rfloor}\binom{r+1}{2m}B_{2m}C_m\right)x\\+\Ocal_r\left( x^{1/2} \exp\left(-C\frac{(\log x)^{3/5}}{(\log \log x)^{1/5}}\right)\right),
\end{multline}

\begin{equation}
\label{eqcor12}
U_{\mu}(x)=
\frac{x}{(r+1)\zeta(2)}\prod_p\left(1-\frac{1}{p(p+1)}\right)+\Ocal_r\left( (\log x)^3\right), 
\end{equation}

\begin{equation}
\label{eqcor13}
U_{\tau}(x)=\\
\frac{x}{(r+1)\zeta(2)}\prod_{p}\left(1+\frac{(2p^2-1)p}{(p-1)^2(p+1)^3}\right)+\Ocal_r\left( (\log x)^5 \right), 
\end{equation}
where
\[
C_m
:=
\prod_p
\left(
1-\frac{(p-1)(p^{2m-1}-1)}
{p(p^{2m+1}-1)}
\right),
\]
and where $K_1$ and $K_2$ are defined by
\eqref{eq0000} and \eqref{eq00000}, respectively.
Moreover, we have
\begin{multline}
\label{eqcor14}
V_{\phi_2}(x)=\left(\frac{1}{2}+\frac{1}{r+1}\prod_{p}\left( 1+\frac{1}{(p+1)^2}\right)\right)x\\+\frac{x}{r+1}\sum_{m=0}^{\lfloor r/2\rfloor}\binom{r+1}{2m}B_{2m}\prod_p\left(1-\frac{p(p^{2m-1}-1)}{(p+1)(p^{2m+2}-1)}\right)+\Ocal_r\left( (\log x)^4\right).
\end{multline}
\end{Thm}
\section{Auxiliary results }
In this section, we collect several estimates that will be used in the proofs of the main results. Throughout, we write \[ \theta(x)=x-\lfloor x\rfloor-\frac12, \] and denote by $\tau^{\star}(n)$ the number of square-free divisors of $n$. We also write $\sigma(n)=\sigma_1(n)$.
\begin{lem}
\label{lem1} 
Let $t>1$ be an integer and let $x\geq 2$. Then
\begin{equation}
\label{eqlem11}
\sum_{\substack{n\leq x\\ \gcd(n, t)=1}}1=\frac{\phi(t)}{t}x-\sum_{d|t}\mu(d)\theta\left(\frac{x}{d}\right).
\end{equation}
Moreover,
\begin{equation}
\label{eqlem14}
\sum_{\substack{n\leq x\\ \gcd(n, t)=1}}\frac{\phi(n)}{n}=\frac{t\phi(t)}{\zeta(2)\phi_2(t)}x+\Ocal\left(\tau^{\star}(t)\log x\right).
\end{equation}
For every integer $s\geq 2$, we have
\begin{equation}
\label{eqlem15}
\sum_{\substack{n\leq x\\ \gcd(n, t)=1}}\frac{\phi_{-s}(n)}{n}=\Ocal\left(\frac{\phi_s(t)}{t^s}\log x+\frac{\sigma(t)}{t}\log x\right),
\end{equation}
whereas
\begin{equation}
\label{eqlem16}
\sum_{\substack{n\leq x\\ \gcd(n, t)=1}}\frac{\phi_{-1}(n)}{n}=
\Ocal\left( \frac{\phi(t)}{t}(\log x)^2+\frac{\phi(t)}{t}\log \log (3t)\log x+\frac{\phi(t)}{t}\log x+\tau (t)\log x\right).
\end{equation}
\end{lem}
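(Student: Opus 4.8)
The plan is to prove the four estimates in increasing order of difficulty, since each one feeds into the next. For \eqref{eqlem11} I would start from the inclusion--exclusion identity $\sum_{\substack{n\le x,\,(n,t)=1}}1=\sum_{d\mid t}\mu(d)\lfloor x/d\rfloor$, obtained by inserting $\1_{(n,t)=1}=\sum_{d\mid(n,t)}\mu(d)$ and swapping the order of summation. Writing $\lfloor x/d\rfloor=x/d-\tfrac12-\theta(x/d)$ and invoking the two elementary facts $\sum_{d\mid t}\mu(d)/d=\phi(t)/t$ and $\sum_{d\mid t}\mu(d)=0$ (valid because $t>1$) collapses the expression to the claimed $\tfrac{\phi(t)}{t}x-\sum_{d\mid t}\mu(d)\theta(x/d)$.

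For \eqref{eqlem14} the engine is the convolution $\phi(n)/n=\sum_{dm=n}\mu(d)/d$. Splitting the coprimality condition $(dm,t)=1$ into $(d,t)=1$ and $(m,t)=1$ and evaluating the inner variable with \eqref{eqlem11} produces $\sum_{\substack{d\le x,\,(d,t)=1}}\frac{\mu(d)}{d}\big(\frac{\phi(t)}{t}\frac{x}{d}-\sum_{c\mid t}\mu(c)\theta(\frac{x}{dc})\big)$. The main term is $\frac{\phi(t)}{t}x\sum_{(d,t)=1}\frac{\mu(d)}{d^2}$ up to a tail of size $\Ocal(1)$, and the Euler product $\sum_{(d,t)=1}\mu(d)/d^2=\frac1{\zeta(2)}\prod_{p\mid t}(1-p^{-2})^{-1}$ simplifies, after inserting $\phi(t)/t=\prod_{p\mid t}(1-1/p)$, exactly to $\frac{t\phi(t)}{\zeta(2)\phi_2(t)}$. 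The $\theta$-part is bounded trivially by $|\theta|\le\tfrac12$ and $\sum_{c\mid t}|\mu(c)|=\tau^{\star}(t)$, yielding $\Ocal(\tau^{\star}(t)\log x)$, which absorbs the $\Ocal(1)$ tail.

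For \eqref{eqlem15} and \eqref{eqlem16} I would use the analogous identity $\frac{\phi_{-s}(n)}{n}=\sum_{dm=n}\frac{\mu(d)}{d}\frac{1}{m^{s+1}}$ coming from $\phi_{-s}=\id_{-s}\ast\mu$. The new feature is that the relevant Dirichlet series $\zeta(u+s)/\zeta(u)$ vanishes at $u=1$, so there is no main term and one need only control the size: interchange summation, apply \eqref{eqlem11} to the inner variable (or partial summation against the coprime counting function), and collect. The inner power sum over $m$ contributes the bounded Euler factor $\prod_{p\mid t}(1-p^{-(s+1)})$, while the residual Möbius sum over $d$ contributes a single $\log x$; tracking the $t$-dependence of the truncation tails and of the $\theta$-errors from \eqref{eqlem11} is what splits the bound into the $\frac{\phi_s(t)}{t^s}\log x$ and $\frac{\sigma(t)}{t}\log x$ pieces.

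The genuinely delicate case is \eqref{eqlem16}, the endpoint $s=1$, and I expect it to be the main obstacle. Here the inner factor is $\sum 1/m^2$, whose boundary corrections interact with the Möbius sum in a way that is not absolutely summable termwise against an explicit $t$; extracting the explicit dependence forces one to estimate $\sum_{\substack{d\le y,\,(d,t)=1}}\mu(d)/d$ uniformly in $t$ and to feed in a Mertens-type bound $\sum_{p\mid t}1/p\ll\log\log(3t)$, which is precisely the source of the $\frac{\phi(t)}{t}\log\log(3t)\log x$ term, with the extra $(\log x)^2$ arising from iterating the $\log x$ estimate through both convolution variables. The bookkeeping of these $t$-uniform error terms, rather than any single hard analytic input, is where the real work lies.
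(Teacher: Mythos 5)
Your outline is correct, but it is worth knowing that the paper does not actually prove this lemma from scratch: \eqref{eqlem11} is quoted from Kiuchi--Matsuoka, \eqref{eqlem14} from Suryanarayana, and \eqref{eqlem15}--\eqref{eqlem16} are obtained by Abel summation from cited bounds on $\sum_{n\leq x,\,(n,t)=1}\phi_{-s}(n)$. Your self-contained route --- detecting coprimality by $\sum_{d\mid (n,t)}\mu(d)$ for \eqref{eqlem11}, and then the convolutions $\phi(n)/n=\sum_{dm=n}\mu(d)/d$ and $\phi_{-s}(n)/n=\sum_{dm=n}(\mu(d)/d)\,m^{-(s+1)}$ --- is the standard proof of the quoted results, and your computation $\frac{\phi(t)}{t}\cdot\frac{1}{\zeta(2)}\prod_{p\mid t}\left(1-p^{-2}\right)^{-1}=\frac{t\phi(t)}{\zeta(2)\phi_2(t)}$ checks out, as does the $\Ocal(\tau^{\star}(t)\log x)$ bound on the $\theta$-contribution. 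One remark that removes the ``genuinely delicate'' bookkeeping you anticipate for \eqref{eqlem15} and \eqref{eqlem16}: these are pure upper bounds, and each stated error term contains a summand of size at least $\log x$ (namely $\frac{\sigma(t)}{t}\log x\geq \log x$ in \eqref{eqlem15}, and $\tau(t)\log x\geq 2\log x$ in \eqref{eqlem16} since $t>1$). Your convolution identity already gives, uniformly in $t$,
\begin{equation*}
\sum_{\substack{n\leq x\\ (n,t)=1}}\frac{|\phi_{-s}(n)|}{n}\leq \sum_{d\leq x}\frac{1}{d}\sum_{m\geq 1}\frac{1}{m^{s+1}}\leq \zeta(2)\sum_{d\leq x}\frac{1}{d}\ll \log x,
\end{equation*}
so the trivial bound implies both estimates; no Mertens-type input $\sum_{p\mid t}1/p\ll\log\log(3t)$ and no uniform treatment of $\sum_{(d,t)=1}\mu(d)/d$ are needed. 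The finer $t$-dependence displayed in the lemma is inherited from the source paper but is only ever used as an upper bound, so nothing is lost.
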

\begin{proof}
Identity~\eqref{eqlem11} follows from the M\"obius inversion formula; see \cite[Lemma~2.1]{K.M}. The asymptotic formula~\eqref{eqlem14} is classical; see, for example, \cite{Sur} or \cite[Chapter~1, Section~I.24]{SMC}. To prove~\eqref{eqlem15} and~\eqref{eqlem16}, we use partial summation together with the estimates
\begin{equation*}
\sum_{\substack{n\leq x\\ \gcd(n, t)=1}}\phi_{-s}(n)=\Ocal\left( \frac{\phi_s(t)}{t^s}x+x\frac{\sigma(t)}{t}\right), \qquad (s\geq 2)
\end{equation*}
and 
\begin{equation*}
\sum_{\substack{n\leq x\\ \gcd(n, t)=1}}\phi_{-1}(n)=\Ocal\left( \frac{\phi(t)}{t}x\log x+x\frac{\phi(t)}{t}\log \log (3t)+x\tau (t)\right).
\end{equation*}
These estimates are given in \cite[Lemmas~2.2 and~2.3]{K.M}. 
\end{proof}
\begin{lem}
\label{lem2}
Let $x\geq 2$. There exists a constant $B>0$ such that
\begin{equation}
\label{eqlem23}
\sum_{n\leq x}\frac{\mu(n)}{n}=\Ocal\left(\exp \left(-B(\log x)^{\frac{3}{5}}(\log \log x)^{\frac{-1}{5}} \right)\right).
\end{equation} 
Furthermore,
\begin{equation}
\label{eqlem26}
\sum_{n\leq x}\frac{\phi(n)}{n^2}=\frac{1}{\zeta(2)}\log x+\frac{1}{\zeta(2)}\left(\gamma-\frac{\zeta'(2)}{\zeta(2)}\right)+\Ocal\left(\frac{\log x}{x}\right).
\end{equation}
We also have
\begin{equation}
\label{eqlem24}
\sum_{n\leq x}\frac{\tau(n)}{n}=\frac{1}{2}(\log x)^2+2\gamma\log x+(2\gamma-1)+\Ocal\left(x^{-285/416}(\log x)^{26947/8320}\right),
\end{equation}
\begin{multline}
\label{taustar}
\sum_{n\leq x}\frac{\tau^{\star}(n)}{n}=
\frac{1}{2\zeta(2)}(\log x)^2+\left(2\gamma+\frac{\zeta'(2)}{\zeta(2)}\right)\frac{\log x}{\zeta(2)}\\+\left(2\gamma-1+\frac{\zeta'(2)}{\zeta(2)}\right)\frac{1}{\zeta(2)}+\Ocal\left( x^{-1/2}\exp\left(A(\log x)^{3/5}(\log \log x)^{-1/5}\right)\right),
\end{multline}
and 
\begin{equation}
\label{WS3}
\sum_{n \leq x}\frac{\tau^2(n)}{n}=\frac{(\log x)^4}{4\pi^2}+\left(\frac{B_1}{3}+\frac{1}{\pi^2}\right)(\log x)^3+B_1(\log x)^2+C\log x+\Ocal(1),
\end{equation}
where $A>0$ and $B_1$, $C$ are constants, and $\zeta'$ denotes the derivative of the Riemann zeta function.
\end{lem}
\begin{proof}
Estimate~\eqref{eqlem23} is due to Jia~\cite{Jia}, while \eqref{eqlem26} is given in \cite[Lemma~2.1]{K2}. For~\eqref{eqlem24}, we use partial summation together with the classical divisor estimate
\begin{equation}
\label{eqlem27}
\sum_{n\leq x}\tau(n) = x\log x + (2\gamma-1)x + \Delta(x),
\end{equation}
where \[ \Delta(x) = \Ocal\!\left( x^{131/416} (\log x)^{26947/8320} \right) \] by a result of Huxley~\cite{H}.
Similarly, the estimates~\eqref{taustar} and~\eqref{WS3} follow by partial summation and
\begin{equation*}
\sum_{n\leq x}\tau^{\star}(n)=\frac{x}{\zeta(2)}\left(\log x +2\gamma-1+\frac{\zeta'(2)}{\zeta(2)}\right)+\Ocal\left( x^{1/2}\exp\left(-A_1(\log x)^{3/5}(\log \log x)^{-1/5}\right)\right) 
\end{equation*} 
and 
\begin{equation}
\label{tausquare}
\sum_{n \leq x}\tau^2(n)=\frac{1}{\pi^2}x (\log x)^3+B_1x(\log x)^2+Cx+D+\Ocal\left(x^{1/2+\varepsilon}\right),
\end{equation}
where $A_1>0$ and $B_1, C, D$ are constants and $\varepsilon>0$. The proof of these sums can be found in \cite{Sur1,W1} and \cite[Chapter~2, Section~II.13]{SMC}.
\end{proof}
\begin{lem}
For any sufficiently large positive number $x>2$, we have 
\label{lem3}
\begin{equation}
\label{WS1}
\sum_{\ell > x}\frac{\tau(\ell)}{\ell^2}=\Ocal\left(\frac{\log x}{x}\right),
\end{equation}
\begin{equation}
\label{WS2}
\sum_{\ell > x}\frac{\tau^2(\ell)}{\ell^2}=\Ocal\left(\frac{(\log x)^3}{x}\right),
\end{equation}
\begin{equation}
\label{WS4}
\sum_{\ell > x}\frac{\tau(\ell)}{\ell^3}=\Ocal\left(\frac{(\log x)}{x^2}\right).
\end{equation}
\end{lem}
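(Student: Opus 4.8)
The plan is to prove these three tail estimates by comparing each series to a known average of the same arithmetical function and invoking partial (Abel) summation, together with the elementary Dirichlet-series bounds already recorded in Lemma~\ref{lem2}. The unifying idea is that a tail $\sum_{\ell>x} a(\ell)/\ell^{s}$ is controlled by the partial sums $A(y)=\sum_{\ell\le y} a(\ell)$ through the identity
\begin{equation*}
\sum_{x<\ell\le Y}\frac{a(\ell)}{\ell^{s}}=\frac{A(Y)}{Y^{s}}-\frac{A(x)}{x^{s}}+s\int_{x}^{Y}\frac{A(u)}{u^{s+1}}\,du,
\end{equation*}
so that once we know $A(u)$ up to a power of $\log u$, the tail is dominated by the boundary term $-A(x)/x^{s}$ plus the convergent integral, both of the same order.

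For Eq~\eqref{WS1} I would take $a=\tau$ and $s=2$. By Eq~\eqref{eqlem27} we have $A(u)=u\log u+(2\gamma-1)u+\Ocal(u^{131/416+\varepsilon})=\Ocal(u\log u)$; feeding this into the Abel identity and letting $Y\to\infty$ gives $A(x)/x^{2}=\Ocal((\log x)/x)$ and $\int_{x}^{\infty} u\log u\cdot u^{-3}\,du=\Ocal((\log x)/x)$, which is exactly the claimed bound. For Eq~\eqref{WS4} the same input $A(u)=\Ocal(u\log u)$ with $s=3$ yields boundary term $\Ocal((\log x)/x^{2})$ and integral $\int_{x}^{\infty} u\log u\cdot u^{-4}\,du=\Ocal((\log x)/x^{2})$, matching the stated estimate. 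For Eq~\eqref{WS2} I would instead take $a=\tau^{2}$, $s=2$, using the Voronoi-type asymptotic behind Eq~\eqref{tausquare}, namely $\sum_{\ell\le u}\tau^{2}(\ell)=\Ocal(u(\log u)^{3})$; then the boundary term is $\Ocal((\log x)^{3}/x)$ and $\int_{x}^{\infty} u(\log u)^{3}\cdot u^{-3}\,du=\Ocal((\log x)^{3}/x)$, giving the result.

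I expect no genuine obstacle here; these are routine convergence estimates and the only thing to watch is the bookkeeping that shows the integral and the boundary term are each $\ll$ the target. The mildly delicate point is to verify that the integral $\int_{x}^{\infty}(\log u)^{k}u^{-2}\,du$ (and its $u^{-3}$ analogue) really does contribute at the same order as the boundary term rather than a smaller one; this follows from repeated integration by parts, or simply from the standard fact that $\int_{x}^{\infty}(\log u)^{k}u^{-2}\,du = \Ocal((\log x)^{k}/x)$, the leading term coming from the lower endpoint. Since all three tails are absolutely convergent for the ranges of $s$ involved, passing to $Y\to\infty$ is harmless, and the three displays follow at once.
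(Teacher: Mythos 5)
Your proof is correct and is essentially the paper's own argument: the paper writes $\ell^{-2}=2\int_{\ell}^{\infty}t^{-3}\,dt$ and interchanges sum and integral, which after splitting $\sum_{x<\ell\le t}\tau(\ell)=A(t)-A(x)$ is precisely your Abel-summation identity, and both arguments then feed in the same inputs, namely Eq~\eqref{eqlem27} for $\tau$ and Eq~\eqref{tausquare} for $\tau^{2}$. No substantive difference.
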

\begin{proof}
Let \[ T(y)=\sum_{n\leq y}\tau(n). \] By~\eqref{eqlem27}, we have \[ T(y)\ll y\log y. \] Partial summation therefore gives \begin{align*} 
\sum_{\ell>x}\frac{\tau(\ell)}{\ell^2} &= -\frac{T(x)}{x^2} + 2\int_x^\infty\frac{T(t)}{t^3}\,dt \\ &\ll \frac{\log x}{x} + \int_x^\infty\frac{\log t}{t^2}\,dt \ll \frac{\log x}{x}, \end{align*} 
which proves~\eqref{WS1}.
Likewise, if 
\[
T_2(y)=\sum_{n\leq y}\tau^2(n), 
\] 
then~\eqref{tausquare} implies 
\[
T_2(y)\ll y(\log y)^3. 
\] 
Another application of partial summation yields 
\[
\sum_{\ell>x}\frac{\tau^2(\ell)}{\ell^2} \ll \frac{(\log x)^3}{x}, 
\] 
which proves~\eqref{WS2}. Finally, using again the bound $T(y)\ll y\log y$, we obtain 
\[
\sum_{\ell>x}\frac{\tau(\ell)}{\ell^3} = -\frac{T(x)}{x^3} + 3\int_x^\infty\frac{T(t)}{t^4}\,dt \ll \frac{\log x}{x^2}.
\] This proves~\eqref{WS4}.
\end{proof}

\section{Proofs}
\subsection{Proof of Theorem \ref{Thm1}}

The proof is based on the identity of Kiuchi and Matsuoka
\cite[Theorem~3.1]{K.M},
\[
\frac1{n^r}
\sum_{k\in\Reg_n}
f(\gcd(k,n))k^r
=
\frac{f(n)}2
+
\frac1{r+1}
\sum_{m=0}^{\lfloor r/2\rfloor}
\binom{r+1}{2m}
B_{2m}
\sum_{d\mid\mid n}
f\!\left(\frac nd\right)
\phi_{1-2m}(d).
\]

Substituting this identity into the definition of $U_f(x)$ gives
\[
U_f(x)
=
\frac12
\sum_{n\le x}
\frac{f(n)}n
+
\frac1{r+1}
\sum_{m=0}^{\lfloor r/2\rfloor}
\binom{r+1}{2m}
B_{2m}
\sum_{n\le x}
\frac1n
\sum_{d\mid\mid n}
f\!\left(\frac nd\right)
\phi_{1-2m}(d).
\]

Since
\[
d\mid\mid n
\iff
d\mid n,\qquad
(d,n/d)=1,
\]
we have
\begin{align*}
\sum_{n\le x}
\frac1n
\sum_{d\mid\mid n}
f\!\left(\frac nd\right)
\phi_{1-2m}(d)
&=
\sum_{n\le x}
\frac1n
\sum_{\substack{d\mid n\\ \gcd(d,n/d)=1}}
f\!\left(\frac nd\right)
\phi_{1-2m}(d)
\\
&=
\sum_{\substack{d\ell\le x\\ \gcd(d,\ell)=1}}
\frac{f(\ell)\phi_{1-2m}(d)}
{d\ell},
\end{align*}
which proves~\eqref{eq1}.

Assume now that $f(n)\neq0$ for every $n\in\mathbb N$. Proceeding exactly as above, we obtain
\[
V_f(x)
=
\frac12
\sum_{n\le x}1
+
\frac1{r+1}
\sum_{m=0}^{\lfloor r/2\rfloor}
\binom{r+1}{2m}
B_{2m}
\sum_{\substack{d\ell\le x\\ \gcd(d,\ell)=1}}
\frac{f(\ell)\phi_{1-2m}(d)}
{f(d\ell)}.
\]
Since
\[
\sum_{n\le x}1
=
x-\theta(x)-\frac12,
\]
we immediately obtain~\eqref{eq2}. This completes the proof.
\subsection{Proof of Theorem~\ref{thm4}}
\label{subsection2}

We begin with the asymptotic formula for $U_{\id}(x)$. Taking
$f=\id$ in~\eqref{eq1}, we obtain
\[
U_{\id}(x)
=
\frac12\sum_{n\leq x}1
+
\frac{1}{r+1}
\sum_{m=0}^{\lfloor r/2\rfloor}
\binom{r+1}{2m}B_{2m}
\sum_{\substack{d\ell\leq x\\ \gcd(d,\ell)=1}}
\frac{\phi_{1-2m}(d)}{d}.
\]
Since
\[
\sum_{n\leq x}1
=
x-\theta(x)-\frac12
\]
and $B_0=1$, it follows that
\begin{equation}
\label{Uid-decomposition}
U_{\id}(x)
=
\frac{x}{2}
-\frac{\theta(x)}{2}
-\frac14
+
\frac{1}{r+1}A_0(x)
+
\frac{1}{r+1}
\sum_{m=1}^{\lfloor r/2\rfloor}
\binom{r+1}{2m}B_{2m}A_m(x),
\end{equation}
where
\[
A_0(x)
:=
\sum_{\substack{d\ell\leq x\\ \gcd(d,\ell)=1}}
\frac{\phi(d)}{d}
\]
and, for $m\geq1$,
\[
A_m(x)
:=
\sum_{\substack{d\ell\leq x\\ \gcd(d,\ell)=1}}
\frac{\phi_{1-2m}(d)}{d}.
\]

We first estimate $A_0(x)$. By~\eqref{eq00},
\[
\frac{\widetilde P(n)}{n}
=
\sum_{d\mid\mid n}\frac{\phi(d)}{d},
\]
and hence
\[
A_0(x)
=
\sum_{n\leq x}\frac{\widetilde P(n)}{n}.
\]
Applying partial summation to the asymptotic formula~\eqref{eq000}, we obtain
\begin{equation}
\label{eqproof22}
A_0(x)
=
\frac{x}{\zeta(2)}
\left(
K_1\left(\log x-\frac12\right)+K_2
\right)
+
\Ocal\!\left(
x^{1/2}
\exp\!\left(
-C\frac{(\log x)^{3/5}}
{(\log\log x)^{1/5}}
\right)
\right).
\end{equation}

We next consider $A_m(x)$ for $m\geq1$. Reversing the order of summation gives
\[
A_m(x)
=
\sum_{d\leq x}
\frac{\phi_{1-2m}(d)}{d}
\sum_{\substack{\ell\leq x/d\\ \gcd(\ell,d)=1}}1.
\]
By~\eqref{eqlem11},
\begin{equation}
\label{eqproof23}
A_m(x)
=
x
\sum_{d\leq x}
\frac{\phi_{1-2m}(d)\phi(d)}{d^3}
-
\sum_{d\leq x}
\frac{\phi_{1-2m}(d)}{d}
\sum_{k\mid d}
\mu(k)
\theta\!\left(\frac{x}{kd}\right).
\end{equation}

The arithmetic function $\phi_{1-2m}(d)\phi(d)$ is multiplicative. Moreover,
\[
\sum_{d\geq1}
\frac{\left|\phi_{1-2m}(d)\right|\phi(d)}{d^3}
<\infty.
\]
Its Dirichlet series at $s=3$ therefore admits an absolutely convergent
Euler product. A calculation of the local factors gives
\[
\sum_{d\geq1}
\frac{\phi_{1-2m}(d)\phi(d)}{d^3}
=
\prod_p
\left(
1-
\frac{(p-1)(p^{2m-1}-1)}
{p(p^{2m+1}-1)}
\right).
\]
Since
\[
\left|\phi_{1-2m}(d)\right|
\leq 1
\qquad (m\geq1),
\]
we also have
\[
\sum_{d>x}
\frac{\left|\phi_{1-2m}(d)\right|\phi(d)}{d^3}
\ll
\sum_{d>x}\frac1{d^2}
\ll
\frac1x.
\]
Consequently,
\begin{equation}
\label{Am-main-series}
\sum_{d\leq x}
\frac{\phi_{1-2m}(d)\phi(d)}{d^3}
=
\prod_p
\left(
1-
\frac{(p-1)(p^{2m-1}-1)}
{p(p^{2m+1}-1)}
\right)+\Ocal\!\left(\frac1x\right).
\end{equation}

For the second term in~\eqref{eqproof23}, the bound
$|\theta(y)|\leq 1/2$ yields
\begin{align*}
\left|
\sum_{d\leq x}
\frac{\phi_{1-2m}(d)}{d}
\sum_{k\mid d}
\mu(k)
\theta\!\left(\frac{x}{kd}\right)
\right|
&\ll
\sum_{d\leq x}
\frac{\left|\phi_{1-2m}(d)\right|\tau(d)}{d}
\\
&\ll
\sum_{d\leq x}\frac{\tau(d)}{d}
\ll
(\log x)^2,
\end{align*}
where the last estimate follows from~\eqref{eqlem24}. Combining this
with~\eqref{eqproof23} and~\eqref{Am-main-series}, we obtain
\begin{equation}
\label{eqproof24}
A_m(x)
=
\prod_p
\left(
1-
\frac{(p-1)(p^{2m-1}-1)}
{p(p^{2m+1}-1)}
\right)x+\Ocal_m\!\left((\log x)^2\right).
\end{equation}
Finally, substituting~\eqref{eqproof22} and~\eqref{eqproof24}
into~\eqref{Uid-decomposition}, and observing that
\[
(\log x)^2
=
o\!\left(
x^{1/2}
\exp\!\left(
-C\frac{(\log x)^{3/5}}
{(\log\log x)^{1/5}}
\right)
\right),
\]
we obtain~\eqref{eqcor11}.\\

We next prove~\eqref{eqcor12}. Taking $f=\mu$ in~\eqref{eq1}, we obtain
\begin{equation}
\label{Umu-decomposition}
U_{\mu}(x)
=
\frac12\sum_{n\leq x}\frac{\mu(n)}{n}
+
\frac{1}{r+1}
\sum_{m=0}^{\lfloor r/2\rfloor}
\binom{r+1}{2m}B_{2m}L_m(x),
\end{equation}
where
\[
L_m(x)
:=
\sum_{\substack{d\ell\leq x\\ \gcd(d,\ell)=1}}
\frac{\phi_{1-2m}(d)}{d}
\frac{\mu(\ell)}{\ell}.
\]
By~\eqref{eqlem23},
\begin{equation}
\label{Umu-first-term}
\sum_{n\leq x}\frac{\mu(n)}{n}
=
\Ocal\!\left(
\exp\!\left(
-B(\log x)^{3/5}
(\log\log x)^{-1/5}
\right)
\right).
\end{equation}

We first consider the case $m=0$. Reversing the order of summation gives
\[
L_0(x)
=
\sum_{\ell\leq x}
\frac{\mu(\ell)}{\ell}
\sum_{\substack{d\leq x/\ell\\ \gcd(d,\ell)=1}}
\frac{\phi(d)}{d}.
\]
Applying~\eqref{eqlem14} with $t=\ell$ and $x$ replaced by $x/\ell$, we obtain
\begin{align}
L_0(x)
&=
\sum_{\ell\leq x}
\frac{\mu(\ell)}{\ell}
\left\{
\frac{\ell\phi(\ell)}
{\zeta(2)\phi_2(\ell)}
\frac{x}{\ell}
+
\Ocal\!\left(
\tau^{\star}(\ell)
\log\!\left(\frac{x}{\ell}\right)
\right)
\right\}
\nonumber\\
&=
\frac{x}{\zeta(2)}
\sum_{\ell\leq x}
\frac{\mu(\ell)\phi(\ell)}
{\ell\phi_2(\ell)}
+
\Ocal\!\left(
\log x
\sum_{\ell\leq x}
\frac{\tau^{\star}(\ell)}{\ell}
\right).
\label{WA}
\end{align}
The function $\frac{\mu(\ell)\phi(\ell)}{\ell\phi_2(\ell)}$
is multiplicative, and its series is absolutely convergent. Indeed, using
\[
\phi(\ell)\ll \ell,
\qquad
\phi_2(\ell)\geq\frac{\ell^2}{\tau(\ell)},
\]
we have
\[
\left|
\frac{\mu(\ell)\phi(\ell)}
{\ell\phi_2(\ell)}
\right|
\ll
\frac{\tau(\ell)}{\ell^2}.
\]
Consequently,
\[
\sum_{\ell\geq1}
\frac{\mu(\ell)\phi(\ell)}
{\ell\phi_2(\ell)}
=
\prod_p
\left(
1-\frac{1}{p(p+1)}
\right).
\]
Moreover, by~\eqref{WS1},
\[
\sum_{\ell>x}
\left|
\frac{\mu(\ell)\phi(\ell)}
{\ell\phi_2(\ell)}
\right|
\ll
\sum_{\ell>x}\frac{\tau(\ell)}{\ell^2}
\ll
\frac{\log x}{x}.
\]
It follows that
\[
\sum_{\ell\leq x}
\frac{\mu(\ell)\phi(\ell)}
{\ell\phi_2(\ell)}
=
\prod_p
\left(
1-\frac{1}{p(p+1)}
\right)
+
\Ocal\!\left(\frac{\log x}{x}\right).
\]
Furthermore,~\eqref{taustar} gives
\[
\log x
\sum_{\ell\leq x}
\frac{\tau^{\star}(\ell)}{\ell}
\ll
(\log x)^3.
\]
Substitution into~\eqref{WA} therefore yields
\begin{equation}
\label{L0-mu}
L_0(x)
=
\frac{x}{\zeta(2)}
\prod_p
\left(
1-\frac{1}{p(p+1)}
\right)
+
\Ocal\!\left((\log x)^3\right).
\end{equation}

We next consider $m=1$. In this case,
\[
L_1(x)
=
\sum_{\ell\leq x}
\frac{\mu(\ell)}{\ell}
\sum_{\substack{d\leq x/\ell\\ \gcd(d,\ell)=1}}
\frac{\phi_{-1}(d)}{d}.
\]
By~\eqref{eqlem16}, with $t=\ell$ and $x$ replaced by $x/\ell$, we have
\begin{equation*}
|L_1(x)|
\ll{}
\sum_{\ell\leq x}
\frac{\phi(\ell)}{\ell^2}
\left(\log\frac{x}{\ell}\right)^2
+
\sum_{\ell\leq x}
\frac{\phi(\ell)}{\ell^2}
\log\log(3\ell)
\log\frac{x}{\ell}
+
\sum_{\ell\leq x}
\frac{\tau(\ell)}{\ell}
\log\frac{x}{\ell}.
\end{equation*}
Since
\[
\sum_{\ell\leq x}\frac{\phi(\ell)}{\ell^2}
\ll\log x
\]
by~\eqref{eqlem26}, and
\[
\sum_{\ell\leq x}\frac{\tau(\ell)}{\ell}
\ll(\log x)^2
\]
by~\eqref{eqlem24}, it follows that
\begin{equation}
\label{L1-mu}
L_1(x)
=
\Ocal\!\left((\log x)^3\right).
\end{equation}

Finally, let $m\geq2$. Reversing the order of summation and applying
\eqref{eqlem15} with $s=2m-1$ gives
\begin{equation*}
|L_m(x)|
\ll_m{}
\sum_{\ell\leq x}
\frac{\phi_{2m-1}(\ell)}{\ell^{2m}}
\log\frac{x}{\ell}+\sum_{\ell\leq x}
\frac{\sigma(\ell)}{\ell^2}
\log\frac{x}{\ell}.
\end{equation*}
Partial summation, together with the classical estimates
\[
\sum_{n\leq y}\frac{\phi_s(n)}{n}
=
\frac{y^s}{s\zeta(s+1)}
+
\Ocal_s(y^{s-1}),
\qquad s\geq2,
\]
and
\[
\sum_{n\leq y}\frac{\sigma(n)}{n}
=
\zeta(2)y+\Ocal(\log y),
\]
implies that
\[
\sum_{\ell\leq x}
\frac{\phi_{2m-1}(\ell)}{\ell^{2m}}
\ll_m\log x
\]
and
\[
\sum_{\ell\leq x}
\frac{\sigma(\ell)}{\ell^2}
\ll\log x.
\]
Consequently,
\begin{equation}
\label{Lm-mu}
L_m(x)
=
\Ocal_m\!\left((\log x)^2\right),
\qquad m\geq2.
\end{equation}

Combining~\eqref{Umu-decomposition},
\eqref{Umu-first-term}, \eqref{L0-mu},
\eqref{L1-mu}, and \eqref{Lm-mu}, we conclude that
\[
U_{\mu}(x)
=
\frac{x}{(r+1)\zeta(2)}
\prod_p
\left(
1-\frac{1}{p(p+1)}
\right)
+
\Ocal_r\!\left((\log x)^3\right).
\]
This proves~\eqref{eqcor12}.\\

We now prove~\eqref{eqcor13}. Taking $f=\tau$ in~\eqref{eq1}, we obtain
\begin{equation}
\label{Utau-decomposition}
U_{\tau}(x)
=
\frac12\sum_{n\leq x}\frac{\tau(n)}{n}
+
\frac{1}{r+1}
\sum_{m=0}^{\lfloor r/2\rfloor}
\binom{r+1}{2m}B_{2m}G_m(x),
\end{equation}
where
\[
G_m(x)
:=
\sum_{\substack{d\ell\leq x\\ \gcd(d,\ell)=1}}
\frac{\phi_{1-2m}(d)}{d}
\frac{\tau(\ell)}{\ell}.
\]
By~\eqref{eqlem24},
\begin{equation}
\label{Utau-first-term}
\sum_{n\leq x}\frac{\tau(n)}{n}
\ll
(\log x)^2.
\end{equation}
We first consider the case $m=0$. Reversing the order of summation gives
\[
G_0(x)
=
\sum_{\ell\leq x}
\frac{\tau(\ell)}{\ell}
\sum_{\substack{d\leq x/\ell\\ \gcd(d,\ell)=1}}
\frac{\phi(d)}{d}.
\]
Applying~\eqref{eqlem14} with $t=\ell$ and with $x$ replaced by
$x/\ell$, we obtain
\begin{equation}
\label{eqproof51}
G_0(x)
=
\frac{x}{\zeta(2)}
\sum_{\ell\leq x}
\frac{\tau(\ell)\phi(\ell)}
{\ell\phi_2(\ell)}+
\Ocal\!\left(
\log x
\sum_{\ell\leq x}
\frac{\tau(\ell)\tau^\star(\ell)}{\ell}
\right).
\end{equation}
The function $\frac{\tau(\ell)\phi(\ell)}
{\ell\phi_2(\ell)}$ is multiplicative, and its associated series is absolutely convergent.
Indeed, using
\[
\phi(\ell)\ll\ell,
\qquad
\phi_2(\ell)\geq\frac{\ell^2}{\tau(\ell)},
\]
we obtain
\[
\frac{\tau(\ell)\phi(\ell)}
{\ell\phi_2(\ell)}
\ll
\frac{\tau^2(\ell)}{\ell^2}.
\]
Hence, by~\eqref{WS2},
\[
\sum_{\ell>x}
\frac{\tau(\ell)\phi(\ell)}
{\ell\phi_2(\ell)}
\ll
\frac{(\log x)^3}{x}.
\]
A calculation of the local factors gives
\[
\sum_{\ell\geq1}
\frac{\tau(\ell)\phi(\ell)}
{\ell\phi_2(\ell)}
=
\prod_p
\left(
1+
\frac{p(2p^2-1)}
{(p-1)^2(p+1)^3}
\right).
\]
Therefore,
\begin{equation}
\label{G0-main-series}
\sum_{\ell\leq x}
\frac{\tau(\ell)\phi(\ell)}
{\ell\phi_2(\ell)}
=
\prod_p
\left(
1+
\frac{p(2p^2-1)}
{(p-1)^2(p+1)^3}
\right)
+
\Ocal\!\left(
\frac{(\log x)^3}{x}
\right).
\end{equation}

Since $\tau^\star(\ell)\leq\tau(\ell)$, we have
\[
\log x
\sum_{\ell\leq x}
\frac{\tau(\ell)\tau^\star(\ell)}{\ell}
\leq
\log x
\sum_{\ell\leq x}
\frac{\tau^2(\ell)}{\ell}
\ll
(\log x)^5
\]
by~\eqref{WS3}. Combining this estimate with
\eqref{eqproof51} and~\eqref{G0-main-series}, we obtain
\begin{equation}
\label{G0-tau}
G_0(x)
=
\frac{x}{\zeta(2)}
\prod_p
\left(
1+
\frac{p(2p^2-1)}
{(p-1)^2(p+1)^3}
\right)
+
\Ocal\!\left((\log x)^5\right).
\end{equation}

We next consider $m=1$. In this case,
\[
G_1(x)
=
\sum_{\ell\leq x}
\frac{\tau(\ell)}{\ell}
\sum_{\substack{d\leq x/\ell\\ \gcd(d,\ell)=1}}
\frac{\phi_{-1}(d)}{d}.
\]
Applying~\eqref{eqlem16} with $t=\ell$ and $x$ replaced by
$x/\ell$, we find that
\begin{align*}
|G_1(x)|
\ll
\sum_{\ell\leq x}
\frac{\tau(\ell)\phi(\ell)}{\ell^2}
\left(\log\frac{x}{\ell}\right)^2
+
\sum_{\ell\leq x}
\frac{\tau(\ell)\phi(\ell)}{\ell^2}
\log\log(3\ell)
\log\frac{x}{\ell}
+
\sum_{\ell\leq x}
\frac{\tau^2(\ell)}{\ell}
\log\frac{x}{\ell}.
\end{align*}
Since $\phi(\ell)\leq\ell$, the first two sums are bounded by
\[
\ll
(\log x)^2
\sum_{\ell\leq x}\frac{\tau(\ell)}{\ell}
+
\log x\log\log(3x)
\sum_{\ell\leq x}\frac{\tau(\ell)}{\ell}
\ll
(\log x)^4\log\log(3x).
\]
The last sum satisfies
\[
\sum_{\ell\leq x}
\frac{\tau^2(\ell)}{\ell}
\log\frac{x}{\ell}
\ll
\log x
\sum_{\ell\leq x}
\frac{\tau^2(\ell)}{\ell}
\ll
(\log x)^5
\]
by~\eqref{WS3}. Consequently,
\begin{equation}
\label{G1-tau}
G_1(x)
=
\Ocal\!\left((\log x)^5\right).
\end{equation}

Finally, let $m\geq2$. By reversing the order of summation and applying
\eqref{eqlem15} with $s=2m-1$, we obtain
\begin{equation*}
|G_m(x)|
\ll_m
\sum_{\ell\leq x}
\frac{\tau(\ell)\phi_{2m-1}(\ell)}
{\ell^{2m}}
\log\frac{x}{\ell}
+
\sum_{\ell\leq x}
\frac{\tau(\ell)\sigma(\ell)}
{\ell^2}
\log\frac{x}{\ell}.
\end{equation*}
Since
\[
\phi_{2m-1}(\ell)\leq\ell^{2m-1},
\]
the first sum is bounded by
\[
\ll
\log x
\sum_{\ell\leq x}
\frac{\tau(\ell)}{\ell}
\ll
(\log x)^3.
\]
Furthermore, the inequality
\[
\sigma(\ell)+\phi(\ell)\leq\ell\tau(\ell),
\qquad \ell\geq2,
\]
implies
\[
\sigma(\ell)\leq\ell\tau(\ell).
\]
Therefore,
\begin{align*}
\sum_{\ell\leq x}
\frac{\tau(\ell)\sigma(\ell)}
{\ell^2}
\log\frac{x}{\ell}
&\ll
\log x
\sum_{\ell\leq x}
\frac{\tau^2(\ell)}{\ell}
\\
&\ll
(\log x)^5
\end{align*}
by~\eqref{WS3}. It follows that
\begin{equation}
\label{Gm-tau}
G_m(x)
=
\Ocal_m\!\left((\log x)^5\right),
\qquad m\geq2.
\end{equation}

Combining~\eqref{Utau-decomposition},
\eqref{Utau-first-term}, \eqref{G0-tau},
\eqref{G1-tau}, and \eqref{Gm-tau}, we conclude that
\[
U_{\tau}(x)
=
\frac{x}{(r+1)\zeta(2)}
\prod_p
\left(
1+
\frac{p(2p^2-1)}
{(p-1)^2(p+1)^3}
\right)
+
\Ocal_r\!\left((\log x)^5\right).
\]
This proves~\eqref{eqcor13}.\\

We finally prove~\eqref{eqcor14}. Taking $f=\phi_2$ in~\eqref{eq2}, and
using the multiplicativity of $\phi_2$ together with $(d,\ell)=1$, we obtain
\[
\frac{\phi_2(\ell)}{\phi_2(d\ell)}
=
\frac{1}{\phi_2(d)}.
\]
Consequently,
\begin{equation}
\label{Vphi2-decomposition}
V_{\phi_2}(x)
=
\frac{x}{2}
-\frac{\theta(x)}{2}
-\frac14
+
\frac{1}{r+1}
\sum_{m=0}^{\lfloor r/2\rfloor}
\binom{r+1}{2m}
B_{2m}F_m(x),
\end{equation}
where
\[
F_m(x)
:=
\sum_{\substack{d\ell\leq x\\ \gcd(d,\ell)=1}}
\frac{\phi_{1-2m}(d)}{\phi_2(d)}.
\]

We first consider the case $m=0$. Reversing the order of summation gives
\[
F_0(x)
=
\sum_{d\leq x}
\frac{\phi(d)}{\phi_2(d)}
\sum_{\substack{\ell\leq x/d\\ \gcd(\ell,d)=1}}1.
\]
Applying~\eqref{eqlem11}, we obtain
\begin{equation}
\label{eqproof41}
F_0(x)
=
x
\sum_{d\leq x}
\frac{\phi(d)^2}{d^2\phi_2(d)}
-
\sum_{d\leq x}
\frac{\phi(d)}{\phi_2(d)}
\sum_{k\mid d}
\mu(k)
\theta\!\left(\frac{x}{dk}\right).
\end{equation}
The arithmetic function $\frac{\phi(d)^2}{d^2\phi_2(d)}$ is multiplicative, and its series is absolutely convergent. A calculation
of the local factors yields
\[
\sum_{d\geq1}
\frac{\phi(d)^2}{d^2\phi_2(d)}
=
\prod_p
\left(
1+\frac{1}{(p+1)^2}
\right).
\]
Moreover, since $\phi(d)^2\leq\phi_2(d)$,
\[
\sum_{d>x}
\frac{\phi(d)^2}{d^2\phi_2(d)}
\leq
\sum_{d>x}\frac1{d^2}
\ll
\frac1x.
\]
Therefore,
\begin{equation}
\label{F0-main-series}
\sum_{d\leq x}
\frac{\phi(d)^2}{d^2\phi_2(d)}
=
\prod_p
\left(
1+\frac{1}{(p+1)^2}
\right)
+
\Ocal\!\left(\frac1x\right).
\end{equation}

For the second term in~\eqref{eqproof41}, the bound
$|\theta(y)|\leq 1/2$ gives
\begin{align*}
\left|
\sum_{d\leq x}
\frac{\phi(d)}{\phi_2(d)}
\sum_{k\mid d}
\mu(k)
\theta\!\left(\frac{x}{dk}\right)
\right|
&\ll
\sum_{d\leq x}
\frac{\phi(d)\tau^\star(d)}{\phi_2(d)}
\\
&\ll
\sum_{d\leq x}
\frac{\phi(d)\tau^2(d)}{d^2}
\\
&\ll
\sum_{d\leq x}
\frac{\tau^2(d)}{d}
\ll
(\log x)^4.
\end{align*}
Here we used $\tau^\star(d)\leq\tau(d)$,
\[
\phi_2(d)\geq\frac{d^2}{\tau(d)},
\]
and~\eqref{WS3}. Combining this estimate with
\eqref{eqproof41} and~\eqref{F0-main-series}, we obtain
\begin{equation}
\label{eqproof42}
F_0(x)
=
x
\prod_p
\left(
1+\frac{1}{(p+1)^2}
\right)
+
\Ocal\!\left((\log x)^4\right).
\end{equation}

We now consider $m\geq1$. As above,~\eqref{eqlem11} gives
\begin{equation}
\label{eqproof43}
F_m(x)
=
x
\sum_{d\leq x}
\frac{\phi_{1-2m}(d)\phi(d)}
{d^2\phi_2(d)}
-
\sum_{d\leq x}
\frac{\phi_{1-2m}(d)}{\phi_2(d)}
\sum_{k\mid d}
\mu(k)
\theta\!\left(\frac{x}{dk}\right).
\end{equation}
The function $
\frac{\phi_{1-2m}(d)\phi(d)}
{d^2\phi_2(d)}$ is multiplicative, and the corresponding series is absolutely convergent. Its Euler product is
\[
\sum_{d\geq1}
\frac{\phi_{1-2m}(d)\phi(d)}
{d^2\phi_2(d)}
=
\prod_p
\left(
1-
\frac{p(p^{2m-1}-1)}
{(p+1)(p^{2m+2}-1)}
\right).
\]
Since
\[
\left|\phi_{1-2m}(d)\right|\leq1
\qquad (m\geq1),
\]
and
\[
\frac{\phi(d)}{\phi_2(d)}
\ll
\frac{\tau(d)}{d},
\]
we have
\[
\sum_{d>x}
\left|
\frac{\phi_{1-2m}(d)\phi(d)}
{d^2\phi_2(d)}
\right|
\ll
\sum_{d>x}\frac{\tau(d)}{d^3}
\ll
\frac{\log x}{x^2}
\]
by~\eqref{WS4}. It follows that
\begin{equation}
\label{Fm-main-series}
\sum_{d\leq x}
\frac{\phi_{1-2m}(d)\phi(d)}
{d^2\phi_2(d)}
=
\prod_p
\left(
1-
\frac{p(p^{2m-1}-1)}
{(p+1)(p^{2m+2}-1)}
\right)
+
\Ocal_m\!\left(\frac{\log x}{x^2}\right).
\end{equation}

For the second term in~\eqref{eqproof43}, we use
$|\theta(y)|\leq1/2$, $|\phi_{1-2m}(d)|\leq1$, and
$\tau^\star(d)\leq\tau(d)$ to obtain
\begin{align*}
\left|
\sum_{d\leq x}
\frac{\phi_{1-2m}(d)}{\phi_2(d)}
\sum_{k\mid d}
\mu(k)
\theta\!\left(\frac{x}{dk}\right)
\right|
&\ll
\sum_{d\leq x}
\frac{\tau^\star(d)}{\phi_2(d)}
\\
&\ll
\sum_{d\leq x}
\frac{\tau^2(d)}{d^2}
\ll 1.
\end{align*}
Thus, by~\eqref{eqproof43} and~\eqref{Fm-main-series},
\begin{equation}
\label{eqproof44}
F_m(x)
=
x
\prod_p
\left(
1-
\frac{p(p^{2m-1}-1)}
{(p+1)(p^{2m+2}-1)}
\right)
+
\Ocal_m(1),
\qquad m\geq1.
\end{equation}

Finally, substituting~\eqref{eqproof42} and~\eqref{eqproof44}
into~\eqref{Vphi2-decomposition}, and observing that
$\theta(x)=\Ocal(1)$, we conclude that
\begin{multline*}
V_{\phi_2}(x)
=
\left(
\frac12
+
\frac{1}{r+1}
\prod_p
\left(
1+\frac{1}{(p+1)^2}
\right)
\right)x
\\
+
\frac{x}{r+1}
\sum_{m=1}^{\lfloor r/2\rfloor}
\binom{r+1}{2m}
B_{2m}
\prod_p
\left(
1-
\frac{p(p^{2m-1}-1)}
{(p+1)(p^{2m+2}-1)}
\right)
+
\Ocal_r\!\left((\log x)^4\right).
\end{multline*}
This proves~\eqref{eqcor14}.
\section*{Acknowledgement}
The authors sincerely thank Isao Kiuchi and Kohji Matsumoto for their careful reading of the manuscript and for their valuable comments and suggestions.

\medskip\noindent {Waseem ALASS: 
Johannes Kepler University Linz, Altenbergerstrasse 69, 4040 Linz, Austria. E-mail: {\tt waseem.alass@jku.at}}

\medskip\noindent {Sumaia Saad Eddin: 
Johann Radon Institute for Computational and Applied Mathematics, Austrian Academy of Sciences, Altenbergerstrasse 69, 4040 Linz, Austria.}\\
E-mail: {\tt sumaia.saad-eddin@ricam.oeaw.ac.at}}

\end{document}